\documentclass[11pt]{amsart}
\usepackage{amscd}
%
%

%
%
\def\b1{B(x^*,\g)}
\def\bb{B(x^*,2\g)}
\def\b3{B(x^*,3\g)}
\def\rta{\rightarrow}
\def\ml{\mathcal{C}}
\def\ml1{\mathcal{C}^1}
\def\mlb1{\mathcal{C}_{b}^{1}}

\def\mbn{\mathbb{R}^n}
\def\g{\gamma}
\def\ua{\mathcal{U}_a}

\def\a{\alpha}
\def\l{\lambda}
\def\frk{\frak}               

\def\Phi{{\frk n}}
\def\pl{\partial}
%


%
\def\opn#1#2{\def#1{\operatorname{#2}}} 
%
\opn\chara{char} \opn\length{\ell} \opn\pd{pd} \opn\rk{rk}
\opn\projdim{proj\,dim} \opn\injdim{inj\,dim} \opn\rank{rank}
\opn\depth{depth} \opn\grade{grade} \opn\height{height}
\opn\embdim{emb\,dim} \opn\codim{codim}

\opn\Tr{Tr} \opn\bigrank{big\,rank}
\opn\superheight{superheight}\opn\lcm{lcm}
\opn\trdeg{tr\,deg}
\opn\reg{reg} \opn\lreg{lreg} \opn\ini{in} \opn\lpd{lpd}
\opn\size{size}
%
\opn\div{div} \opn\Div{Div} \opn\cl{cl} \opn\Cl{Cl}
%
%
\opn\Spec{Spec} \opn\Supp{Supp} \opn\supp{supp} \opn\Sing{Sing}
\opn\Ass{Ass} \opn\Min{Min}
%
%
\opn\Ann{Ann} \opn\Rad{Rad} \opn\Soc{Soc}
%
%
\opn\Im{Im} \opn\Ker{Ker} \opn\Coker{Coker} \opn\Am{Am}
\opn\Hom{Hom} \opn\Tor{Tor} \opn\Ext{Ext} \opn\End{End}
\opn\Aut{Aut} \opn\id{id}

\opn\nat{nat}
\opn\pff{pf}
\opn\Pf{Pf} \opn\GL{GL} \opn\SL{SL} \opn\mod{mod} \opn\ord{ord}
\opn\Gin{Gin} \opn\Hilb{Hilb}\opn\sdepth{sdepth}
%
%
\opn\aff{aff} \opn\con{conv} \opn\relint{relint} \opn\st{st}
\opn\lk{lk} \opn\cn{cn} \opn\core{core} \opn\vol{vol}
\opn\link{link} \opn\star{star}
\opn\gr{gr}

%
%

\def\pot#1#2{#1[\kern-0.28ex[#2]\kern-0.28ex]}

%
%
\opn\dirlim{\underrightarrow{\lim}}
\opn\inivlim{\underleftarrow{\lim}}
%
%
%

%
%

\def\Implies{\ifmmode\Longrightarrow \else
        \unskip${}\Longrightarrow{}$\ignorespaces\fi}
\def\implies{\ifmmode\Rightarrow \else
        \unskip${}\Rightarrow{}$\ignorespaces\fi}
\def\iff{\ifmmode\Longleftrightarrow \else
        \unskip${}\Longleftrightarrow{}$\ignorespaces\fi}

\let\:=\colon
\newtheorem{Theorem}{THEOREM}[section]
\newtheorem{lemma}[Theorem]{LEMMA}

\newtheorem{Remark}[Theorem]{Remark}

%
%
\let\epsilon\varepsilon
\let\phi=\varphi
\let\kappa=\varkappa
%
%
\textwidth=15cm \textheight=22cm \topmargin=0.5cm
\oddsidemargin=0.5cm \evensidemargin=0.5cm \pagestyle{plain}
%
%
\def\qed{\ifhmode\textqed\fi
      \ifmmode\ifinner\quad\qedsymbol\else\dispqed\fi\fi}
\def\textqed{\unskip\nobreak\penalty50
       \hskip2em\hbox{}\nobreak\hfil\qedsymbol
       \parfillskip=0pt \finalhyphendemerits=0}
\def\dispqed{\rlap{\qquad\qedsymbol}}

%
\opn\dis{dis}
\def\pnt{{\raise0.5mm\hbox{\large\bf.}}}

\opn\Lex{Lex}



\begin{document}
\textwidth=13cm
\title{Gradient flows with jumps associated with nonlinear Hamilton-Jacobi equations with jumps}

\author{ SAIMA PARVEEN, Constantin Varsan}

\begin{abstract}We analyze gradient flows with jumps generated by a finite set of complete vector fields in involution using some Radon measures $u\in \mathcal{U}_a$ as admissible perturbations. Both the evolution of a bounded gradient flow $\{x^u(t,\l)\in B(x^*,3\g)\subseteq \mbn: \,t\in[0,T],\,\l\in B(x^*,2\g)\}$ and the unique solution $\l=\psi^u(t,x)\in B(x^*,2\g)\subseteq \mbn$ of integral equation $x^u(t,\l)=x\in B(x^*,\g), \,t\in[0,T]$, are described using the corresponding gradient representation associated with flow and Hamilton-jacobi equations.\\
\\
AMS 2000 subject classification: 58F39, 58F25, 35L45\\
\\
Keywords: Gradient flows with jumps, nonlinear Hamilton-Jacobi equations with jumps
\end{abstract}

\maketitle
\section{INTRODUCTION}
For a given finite set of complete vector fields $\{g_1,\ldots,g_m\}\subseteq\mathcal{C}^{\infty}(\mbn;\mbn)$ consider the corresponding local flows $\{G_1(t_1)[x],\ldots,G_m(t_m)[x]:\,|t_i|\leq a_i,\,x\in B(x^*,3\g)\leq\mbn,\,1\leq i\leq m\}$ generated by $\{g_1,\ldots,g_m\}$ correspondingly and satisfying

\begin{equation}\label{1.1}
|G_i(t_i)[x]-x|\leq \frac{\g}{2m},\,x\in B(x^*,3\g),\,|t_i|\leq a_i,\,1\leq i\leq m
\end{equation}
for some fixed constants $a_i>0$ and $\g>0$.

Denote by $\mathcal{U}_a$ the set of admissible perturbations consisting of all piecewise right-continuous mappings (of $t\geq0$) $u(t,x):[0,\infty)\times\mbn\rightarrow\bigsqcup=\mathop{\prod}\limits_{i=1}^{m}[-a_i,a_i]$ fulfilling
\begin{equation}
u(0,\l)=0,\,u(t,.)\in\mlb1(\mbn;\mbn)\,\,\hbox{and}\,\, |\pl_\l u_i(t,\l)|\leq K_1,\,t\geq0,\,\l\in\mbn,\,1\leq i\leq m,
\end{equation}
for some fixed constant $K_1>0$.

For each admissible perturbation $u\in\mathcal{U}_a$, we associate a piecewise right-continuous trajectory (for $t\geq0$)
\begin{equation}\label{l.3}
x^u(t,\l)=G\big(u(t,\l)\big)[\l],\,\,t\geq0,\,\l\in B(x^*,2\g),
\end{equation}
where the smooth mapping $G(p)[x]:\bigsqcup\times B(x^*,2\g)\rightarrow B(x^*,3\g)$ is defined by
\begin{equation}
G(p)[x]=G_1(t_1)\circ\ldots\circ G_m(t_m)[x],\,p=(t_1,\ldots,t_m)\in \bigsqcup,\,x\in B(x^*,2\g)
\end{equation}
verifying $G(p)[x]\in B(x^*,3\g)$ (see \eqref{1.1}).

We are going to introduce some nonlinear ODE with jumps fulfilled by the bounded flow $\{x^u(t,\l):\,t\in[0,T],\,\l\in B(x^*,2\g)\}$ defined in \eqref{l.3}, when $u\in \mathcal{U}_a$ has a bounded variation property. In addition, the unique solution $\{\l=\psi(t,x)\in B(x^*,2\g):\,t\in[0,T],\,x\in B(x^*,\g)\}$ of the integral equation
\begin{equation}
x^u(t,\l)=x\in B(x^*,\g),\,t\in[0,T]
\end{equation}
fulfils a quasilinear  Hamilton-Jacobi (H-J) equation on each continuity interval $t\in[t_k,t_{k+1})\subseteq [0,T]$. These result are contained in the last section of this paper(see Theorems \ref{t.1}, \ref{t:2} and \ref{t:3}). In the case that we assume $\{g_1,\ldots,g_m\}\subset \mathcal{C}^{\infty}(\mbn;\mbn)$ are commuting using Lie bracket then the result are more or less contained in \cite{01}.

Here, in this paper, the vector fields $\{g_1,\ldots,g_m\}\subset \mathcal{C}^{\infty}(\mbn;\mbn)$ are supposed to be in involution over reals which lead us to make use of algebraic representation for gradient systems in a finite dimensional Lie algebra (see \cite{01}) without involving a global nonsingularity  or local times. The analysis performed here reveals the meaningful connection between dynamical systems and partial differential equations.
\section{Formulation of problems and some auxiliary results}
Consider a finite set of complete vector fields $g_i\in\mathcal{C}^{\infty}(\mbn;\mbn),\,1\leq i\leq m$, and let $\{G_i(t_i)[x]:\,|t_i|\leq a_i,\,x\in B(x^*,3\g)\subseteq\mbn\}$ be the local flow generated by $g_i$ satisfying
\begin{equation}\label{2.1}
|G_i(t_i)[x]-x|\leq\frac{\g}{2m},\,x\in B(x^*,3\g),\,|t_i|\leq a_i,\,1\leq i\leq m
\end{equation}
for some fixed constants $a_i>0$ and $\g>0$.

Denote by $\mathcal{U}_a$ the set of admissible perturbations consisting of all piecewise right-continuous mappings(of $t\geq0$)
$u(t,\l): [0,\infty)\times\mbn\rightarrow \bigsqcup=\mathop{\prod}\limits_{i=1}^{m}[-a_i,a_i]$ fulfilling
\begin{equation}\label{2.2}
u(0,\l)=0,\,u(t,.)\in\mlb1(\mbn;\mbn)\,\hbox{and}\,|\pl_\l u_i(t,\l)|\leq K_1,\,t\geq 0,\,\l\in\mbn,\,1\leq i\leq m,
\end{equation}
for some fixed constant $K_1>0$. For each admissible perturbation $u\in\ua$, we associate a piecewise right-continuous trajectory (for $t\geq0$)
\begin{equation}\label{2.3}
x^u(t,\l)=G\big(u(t,\l)\big)[\l],\,\,t\geq0,\,\l\in B(x^*,2\g)\,\,.
\end{equation}
Here the gradient smooth mapping $G(p)[x]:\bigsqcup\times B(x^*,2\g)\rta \b3$ is defined as follows
\begin{equation}\label{2.4}
G(p)[x]=G_1(t_1)\circ\ldots\circ G_m(t_m)[x],\,p=(t_1,\ldots,t_m)\in \bigsqcup,\,x\in B(x^*,2\g)\,,
\end{equation}
and satisfies (see (\ref{2.1})) $G(p)[\l]\in\b3$ for any $p\in\bigsqcup$ and $\l\in\bb$.

The flow with jumps represented as in (\ref{2.3}) stands for a gradient flow with jumps and it relies on the smooth mapping defined in (\ref{2.4}) which is the unique solution of an associated integrable gradient system
\begin{equation}\label{2.5}
\left\{
  \begin{array}{ll}
    \pl_{t_1}y=& g_1(y),\,\pl_{t_2}y=Y_2(t;y),\ldots,\pl_{t_m}y=Y_m(t_1,\ldots,t_{m-1};y), \\
    y(0)=&\l\in\bb,\,p=(t_1,\ldots,t_m)\in\bigsqcup,\,y\in\mbn.
  \end{array}
\right.
\end{equation}
We are looking for sufficient condition on $\{g_1,\ldots,g_m\}$ (see \cite{02}) such that the vector fields with parameters given in (\ref{2.5}) can be represented as follows
\begin{equation}\label{2.6}
\{g_1,Y_2(t_1),\ldots,Y_{m}(t_1,\ldots,t_{m-1})\}(y)=\{g_1,\ldots,g_m\}(y)A(p),\,y\in \b3\,,
\end{equation}\label{2.7}
$p=(t_1,\ldots,t_m)\in\bigsqcup$, where the $(m\times m)$ matrix $A(p)$ satisfies
\begin{equation}\label{2.7.5}
A(0)=I_m,\,A(p)=[b_1b_2(t_1)\ldots b_m(t_1,\ldots,t_{m-1})],\,b_j\in\mathcal{C}^\infty\big(\bigsqcup,\mbn\big),\,b_1=\left(
                                                                                                               \begin{array}{c}
                                                                                                                 1 \\
                                                                                                                 0 \\
                                                                                                                 . \\
                                                                                                                 . \\
                                                                                                                 .\\
                                                                                                                 0 \\
                                                                                                               \end{array}
                                                                                                             \right)
\end{equation}
This algebraic representation help us to define each integral
\begin{equation}\label{2.8}
\int_{0}^{t} b_{j}^{i}\big(u_1(s,\l),\ldots,u_{j-1}(s,\l)\big)d_su_j(s,\l)=\a_{ij}^{u}(t,\l),
\end{equation}
$1\leq j\leq m,\,1\leq i\leq m,\,t\in[0,T]$, as a bounded variation function with respect to $t\in[0,T]$, provided we assume that
\begin{equation}\label{2.9}
\hbox{each}\,\,u_i(t,\l),\,t\in [0,T],\,1\leq i\leq m,\,\hbox{has a bounded variation property}.
\end{equation}
In addition, the algebraic representation (\ref{2.6})(see \cite{02}) can be obtained assuming $g_i\in\mathcal{C}^\infty(\mbn;\mbn),\,1\leq i\leq m$, and
\begin{equation}\label{2.10}
\left\{
  \begin{array}{ll}
    &\{g_1(x),\ldots,g_m(x):\,x\in \b3\subseteq\mbn\}\,\hbox{are in involution}\\
 &\hbox{over reals, i.e each Lie
bracket can be written as} \,[g_i,g_j](x)=\mathop{\sum}\limits_{k=1}^{m}\g_{k}^{ij}g_k(x)\\
&\hbox{for}\,x\in\b3,\,\hbox{using}\,\g_{k}^{ij}\in\mathbb{R}\,.
  \end{array}
\right.
\end{equation}
Let $[t_k,t_{k+1})\subset [0,T],\,0\leq k\leq N-1$, be the continuity intervals of $u\in\ua$.\\
\underline{\textbf{Problem $P_1$}}. Under the hypothesis (\ref{2.9}) and (\ref{2.10}), describe the evolution of the gradient flow with jumps in (\ref{2.3}) as follows
\begin{equation}\label{2.11}
\left\{
  \begin{array}{ll}
    d_tx^u(t,\l)&= \mathop{\sum}\limits_{k=1}^{m}g_i\big(x^u(t,\l)\big)d_t\beta_i^u(t,\l),\,t\in[t_k,t_{k+1}),\,0\leq k\leq N-1\,,\\
    x^u(0,\l)&=\l\in\bb,\,\hbox{where}\,\beta_i^u(t,\l) =\mathop{\sum}\limits_{k=1}^{m}\a_{ij}^u(t,\l),\,1\leq i\leq m\,.
 \end{array}
\right.
\end{equation}
Here the matrix $\{\a_{ij}^u(t,\l):\,i,j\in\{1,\ldots,m\},\,t\in[0,T]\}$ of bounded variation and piecewise right-continuous function of $t\in[0,T]$ are defined in (\ref{2.8}).\\
\underline{\textbf{Problem $P_2$}}. Under the hypothesis (\ref{2.9}), (\ref{2.10}) and for $K_1>0$ sufficiently small (see (\ref{2.2})), prove that the integral equations (with respect to $\l\in\bb$ see (\ref{2.3}))
\begin{equation}\label{2.12}
x^u(t,\l)=x\in B(x^*,\g),\,\,\hbox{are reversibly with respect to }\, \l\in B(x^*,2\g)\,.
\end{equation}
The unique bounded variation and piecewise right-continuous solution $\{\l=\psi^u(t,x)\in B(x^*,2\g);\,t\in[0,T]\}$
is first order continuously differentiable of $x\in\hbox{int}B(x^*,\g)$.
\begin{Remark}
One may wonder about the Hamilton-Jacobi equation with jumps satisfied by the unique solution $\{\l=\psi^u(t,x)\in B(x^*,2\g):\,t\in[0,T],\,x\in B(x^*,\g)\}$ found in ($P_2$). This will be presented at the end of the following section. The next Lemma lead us to the solution of the problem ($P_1$).
\end{Remark}
\begin{lemma}
Assume that the hypothesis (\ref{2.9}) and (\ref{2.10}) are fulfilled and consider the gradient flow with jumps $\{x^u(t,\l):t\in[0,T],\l\in B(x^*,2\g)\}$ defined in (\ref{2.3}), where $u\in\ua$ and $T>0$ are fixed arbitrarily. Then there exists an ($m\times m$) matrix composed by bounded variation and piecewise right-continuous functions $\{\a_{ij}^u(t,\l):\,\a_{ij}^u(0,\l)=0,\,1\leq i,j\leq m,\,t\in[0,T],\,\l\in B(x^*,2\g)\}$ (see (\ref{2.8})) such that
\begin{equation}\label{2.13}
\left\{
  \begin{array}{ll}
   & d_t x^u(t,\l)=\mathop{\sum}\limits_{i=1}^{m}g_i\big(x^u(t,\l)\big)d_t \beta_i^u(t,\l),\,t\in[t_k.t_{k+1}),\,0\leq k\leq N-1,\\
   & x^u(0,\l)=\l,\,\hbox{where}\,\,\beta_i^u\mathop{=}\limits^{\hbox{def}}\mathop{\sum}\limits_{j=1}^{m}\a_{ij}^u(t,\l),\,1\leq i\leq m,
  \end{array}
\right.
\end{equation}
\noindent and $[t_k,t_{k+1})\subseteq[0,T],\,0\leq k\leq N-1,$ are the continuity intervals of $u\in\ua$.
\end{lemma}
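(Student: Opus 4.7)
The plan is to differentiate the composition $x^u(t,\l)=G(u(t,\l))[\l]$ on each continuity interval $[t_k,t_{k+1})$ and then use the gradient representation (\ref{2.5})--(\ref{2.6}) to rewrite every partial derivative of $G$ as a linear combination of the original vector fields $g_i$ evaluated along the flow.

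Fix a continuity interval $[t_k,t_{k+1})\subseteq [0,T]$. On this interval $t\mapsto u(t,\l)$ is continuous, of bounded variation by (\ref{2.9}), and takes values in the compact box $\bigsqcup$; the outer map $p\mapsto G(p)[\l]$ is smooth on $\bigsqcup$. The natural candidate for the Stieltjes differential is therefore
\[ d_t x^u(t,\l)=\sum_{j=1}^{m}\pl_{t_j}G(p)[\l]\Big|_{p=u(t,\l)}\,d_t u_j(t,\l). \]
Now (\ref{2.5}) gives $\pl_{t_j}G(p)[\l]=Y_j(t_1,\ldots,t_{j-1};\,G(p)[\l])$, and the algebraic representation (\ref{2.6})--(\ref{2.7.5}) further identifies $Y_j(t_1,\ldots,t_{j-1};y)=\sum_{i=1}^{m}g_i(y)\,b_j^i(t_1,\ldots,t_{j-1})$ with $y=G(p)[\l]$. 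Substituting $p=u(t,\l)$, $y=x^u(t,\l)$ and interchanging the two finite sums yields
\[ d_t x^u(t,\l)=\sum_{i=1}^{m}g_i\bigl(x^u(t,\l)\bigr)\sum_{j=1}^{m} b_j^i\bigl(u_1(t,\l),\ldots,u_{j-1}(t,\l)\bigr)\,d_t u_j(t,\l), \]
which by the definition (\ref{2.8}) of $\a_{ij}^u$ and the definition $\beta_i^u=\sum_{j=1}^m \a_{ij}^u$ is exactly the first line of (\ref{2.13}). The initial conditions $\a_{ij}^u(0,\l)=0$ and $x^u(0,\l)=G(0)[\l]=\l$ follow from (\ref{2.8}) and $u(0,\l)=0$.

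The main obstacle I expect is making the Stieltjes calculus above rigorous, specifically justifying the scalar chain rule for a smooth map composed with a right-continuous BV curve. I would first argue that, because each coordinate $u_i(\cdot,\l)$ is of bounded variation on $[0,T]$, each integral in (\ref{2.8}) exists as a Riemann-Stieltjes integral, with the continuous integrand $b_j^i\circ(u_1,\ldots,u_{j-1})$ against the right-continuous BV integrator $u_j$, so that $\a_{ij}^u(\cdot,\l)$ and $\beta_i^u(\cdot,\l)$ are themselves right-continuous BV functions on $[0,T]$. Then, restricted to a continuity interval $[t_k,t_{k+1})$, the standard Stieltjes chain rule, together with the fact that the entries $b_j^i$ of $A(p)$ are smooth and hence bounded on the compact box $\bigsqcup$, validates the interchange of sum and differential above. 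Right-continuity at each left endpoint $t_k$ is inherited from $u$, which completes the formula on every continuity interval.
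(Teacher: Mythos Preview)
Your proposal is correct and follows essentially the same route as the paper: write $x^u(t,\l)=y_\l(u(t,\l))$ with $y_\l(p)=G(p)[\l]$ the solution of the integrable gradient system (\ref{2.5}), and then invoke the algebraic representation (\ref{2.6})--(\ref{2.7.5}) to convert the partials $\pl_{t_j}G$ into combinations of the $g_i$ along the flow. The paper's own proof is in fact terser than yours and does not spell out the Stieltjes chain-rule justification you supply in your final paragraph, so your argument is, if anything, a more explicit version of the intended one.
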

\begin{proof}
By definition, $x^u(t,\l)\in\b3,\,t\geq0,\,\l\in B(x^*,2\g)$ (see (\ref{2.3})) where $x^u(t,\l)=G(u(t,\l))[\l]$ defined in (\ref{2.4}) fulfils the integrable gradient system given in (\ref{2.5})(see \cite{02}). In addition, using the hypothesis (\ref{2.10}) (see \cite{02}) we may and do represent the vector fields of (\ref{2.5}) as in (\ref{2.6}). As far as $x^u(t,\l)=y_{\l}(u(t,\l)),\,t\in[0,T],\,\l\in B(x^*,2\g)$, where $\{y_\l(p):p\in \bigsqcup\}$ is the unique solution of (\ref{2.5}), we get the conclusion (\ref{2.13}) provided the algebraic representation (\ref{2.6}) and (\ref{2.7.5}) is used. The proof is complete.
\end{proof}
\begin{Remark}
For solving integral equation $x^u(t,\l)=x\in B(x^*,\g)$ (for some fixed $u\in\ua$) using integral representation (\ref{2.3}), we notice that these are equivalent with the following integral equations
\begin{equation}\label{2.14}
\l=H\big(u(t,\l)\big)[x],\,t\in[0,T],\,x\in B(x^*,\g)
\end{equation}
with respect to $ \l\in B(x^*,2\g)$. Here $H(p)[x]=[G(p)]^{-1})(x)$ satisfies
\begin{equation}\label{2.15}
H(p)[x]\mathop{=}\limits^{\hbox{def}}G_m(-t_m)\circ\ldots\circ G_1(-t_1)[x]\in B(x^*,2\g),\,\hbox{for any}\,\,p=(t_1,\ldots,t_m)\in\bigsqcup
\end{equation}
\noindent and $x\in B(x^*,\g)$.

In addition, using the hypothesis (\ref{2.10}) and writing the corresponding integrable gradient system for $y(p;\l)=G(p)[\l]$ (see (\ref{2.5}) and (\ref{2.6})) we get each $\pl_{t_i}(H(p)[x])$ as follows
\begin{equation}\label{2.16}
  \begin{array}{ll}
    &\pl_{t_1}H(p)[x]=-\pl_x\big(H(p)[x]\big)g_1(x),\,\pl_{t_2}H(p)[x]=-\pl_x\big(H(p)[x]\big)Y_2(t_1;x), \\
    &,\ldots,\pl_{t_m}H(p)[x]=-\pl_x\big(H(p)[x]\big)Y_m(t_1,\ldots,t_{m-1};x).
  \end{array}
\end{equation}
\noindent Here a direct computation is applied to the identity $H(p)[G(p)(\l)]=\l$ and write $0=\pl_{t_i}H(p)[x]+\pl_x\big(H(p)[x]\big)Y_i(t_1,\ldots,t_{i-1;x})$ for each $i\in \{1,\ldots,m\}$, where $Y_1(x)=g_1(x)$ and (see (\ref{2.5}) and (\ref{2.6}))
\begin{equation}\label{2.17}
\{g_1(x),Y_2(t_1;x),\ldots,Y_m(t_1,\ldots,t_{m-1};x)\}=\{g_1(x),\ldots,g_m(x)\}A(p),\,p\in\bigsqcup\,.
\end{equation}
\end{Remark}
\noindent Denote $z(p,x)=H(p)[x]$.
\begin{lemma}\label{l:2}
Assume that the hypothesis (\ref{2.10}) is satisfied and define $H(p)[x]=[G(p)]^{-1}(x)=G_m(-t_m)\circ\ldots G_1(-t_1)[x],\,x\in B(x^*,\g),\,p=(t_1,\ldots,t_m)\in \bigsqcup$, where $y=G(p)[\l],\,p\in\bigsqcup,\,\l\in B(x^*,2\g)$, verifies (\ref{2.4}) and is the unique solution of the integrable gradient system (\ref{2.5}) and (\ref{2.6}). Then there exists an $(m\times m)$ analytic matrix $A(p)$ verifying (\ref{2.17}) such that the following system of (H-J) equation is fulfilled
\begin{equation}\label{2.18}\left\{
                              \begin{array}{ll}
                                &\pl_pz(p;x)+\pl_x(z(p;x))\{g_1(x),\ldots,g_m(x)\}A(p)=0,\,p\in \bigsqcup,\,x\in B(x^*,\g) \\
                               & z(0;x)=x
                              \end{array}
                            \right.
\end{equation}
\end{lemma}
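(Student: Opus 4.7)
The plan is to bootstrap the Hamilton-Jacobi system (\ref{2.18}) directly from the inverse relation $H(p)[G(p)[\lambda]] = \lambda$ by differentiating in each parameter $t_i$, and then to package the resulting $m$ scalar equations into a single matrix equation using the algebraic representation guaranteed by hypothesis (\ref{2.10}).

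First, I would verify that $H$ is a well-defined smooth mapping $H(p)[\cdot]:B(x^*,\g) \to B(x^*,2\g)$ for each $p \in \bigsqcup$; this follows from (\ref{2.1}) applied to the flows $G_i(-t_i)$ in the composition (\ref{2.15}), since each step moves a point by at most $\g/(2m)$. The initial condition $z(0;x) = x$ is immediate from $G_i(0) = \mathrm{id}$, so the content of the lemma lies entirely in the PDE.

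Next, I would exploit the inverse identity $H(p)[G(p)[\lambda]] = \lambda$ for $p \in \bigsqcup$, $\lambda \in B(x^*,2\g)$. Differentiating both sides with respect to $t_i$ and applying the chain rule yields, for every $i \in \{1,\ldots,m\}$,
\begin{equation*}
0 \;=\; \pl_{t_i}H(p)[y] \;+\; \pl_x H(p)[y]\cdot \pl_{t_i}G(p)[\lambda], \qquad y=G(p)[\lambda].
\end{equation*}
By the integrable gradient system (\ref{2.5}), $\pl_{t_i}G(p)[\lambda] = Y_i(t_1,\ldots,t_{i-1};y)$, with $Y_1(y)=g_1(y)$. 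Since $G(p)[\cdot]$ sweeps out $B(x^*,3\g)$ as $\lambda$ ranges over $B(x^*,2\g)$, we may substitute $x$ for $y$ on $B(x^*,\g)$ and obtain precisely (\ref{2.16}):
\begin{equation*}
\pl_{t_i}z(p;x) \;+\; \pl_x z(p;x)\, Y_i(t_1,\ldots,t_{i-1};x) \;=\; 0, \qquad 1 \le i \le m.
\end{equation*}

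Finally, I would invoke hypothesis (\ref{2.10}), which by the cited result \cite{02} produces an analytic $(m\times m)$ matrix $A(p)$, with $A(0)=I_m$ and the column structure of (\ref{2.7.5}), such that the algebraic representation (\ref{2.6}) (equivalently (\ref{2.17})) holds: $\{g_1(x),Y_2(t_1;x),\ldots,Y_m(t_1,\ldots,t_{m-1};x)\} = \{g_1(x),\ldots,g_m(x)\}A(p)$. Collecting the $m$ scalar identities above into a single row of vector-valued equations and factoring $A(p)$ on the right then delivers the matrix form (\ref{2.18}), completing the proof. The only nontrivial step is the existence of $A(p)$, which is imported from \cite{02} and is the whole reason the involutivity hypothesis (\ref{2.10}) is assumed; everything else is chain-rule bookkeeping on the identity $H\circ G = \mathrm{id}$.
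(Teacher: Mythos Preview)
Your proof is correct and follows essentially the same route as the paper: differentiate the identity $H(p)[G(p)(\lambda)]=\lambda$ in each $t_i$, identify $\partial_{t_i}G(p)[\lambda]$ with $Y_i$ via the gradient system (\ref{2.5}), and then use the algebraic representation (\ref{2.6})/(\ref{2.17}) furnished by hypothesis (\ref{2.10}) to rewrite the $m$ equations in matrix form. The extra remarks you include on well-definedness of $H$ and the initial condition $z(0;x)=x$ are harmless elaborations of what the paper leaves implicit.
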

\begin{proof}
A direct computation applied to the identity $H(p)[G(p)(\l)]=\l$ lead us to the following system of (H-J) equations (see $z(p;x)=H(p)[x]$)
\begin{equation}\label{2.19}
\pl_{t_i}z(p;x)+\pl_x(z(p;x))Y_i(t_1,\ldots,t_{i-1};x)=0,\,1\leq i\leq m,\,\forall\,\,p\in\bigsqcup,\,x\in B(x^*,\g)\,.
\end{equation}
Here the vector fields with parameters $\{Y_1,\ldots,Y_m\}$ are defined in (\ref{2.5}) and fulfils the algebraic representation given in (\ref{2.6}). Using (\ref{2.6}), we rewrite (\ref{2.19}) as follows
\begin{equation}\label{2.20}
\left\{
  \begin{array}{ll}
 &\pl_p z(p;x)+\pl_x(z(p;x))\{g_1(x),\ldots,g_m(x)\}A(p)=0,\,p\in\bigsqcup,\,x\in B(x^*,\g)\\
   & z(0;x)=x
  \end{array}
\right.
\end{equation}
and the proof is complete.
\end{proof}
\begin{lemma}\label{l:3}
Under the conditions assumed in Lemma \ref{l:2}, define
\begin{equation}\label{2.21}
V^u(t,x;\l)=z\big(u(t,\l);x\big),\,t\geq0,\,\l\in\mbn,\,x\in B(x^*,\l)\,,
\end{equation}
where $u\in\ua$ is fixed and $u(p;x),\,p\in\bigsqcup,\,\,x\in B(x^*,\l)$, satisfies (H-J) equations (\ref{2.18}). Then the ($n\times n$) matrix $M^u(t,x;\l)\mathop{=}\limits^{\hbox{def}}\pl_\l V^u(t,x;\l)$, verifies the following inequality
\begin{equation}\label{2.22}
|M^u(t,x;\l)|\leq C_1C_2K_1,\,t\geq0,\,\l\in\mbn,\,x\in B(x^*,\g)\,,
\end{equation}
where $K_1>0$ is fixed in (\ref{2.2})(see definition of $\ua$) and
\begin{equation}\label{2.23}
C_1\mathop{=}\limits^{\hbox{def}}\hbox{max}\{|\pl_x(z(p;x))g_i(x)|:\,p\in\bigsqcup,\,x\in B(x^*,\g),\,1\leq i\leq m\}\,,
\end{equation}

\begin{equation}\label{2.24}
C_2\mathop{=}\limits^{\hbox{def}}\hbox{max}\{|A(p)|:\,p\in\bigsqcup\}\,(A(p)\,\,\hbox{is given in (\ref{2.17}) and used in (\ref{2.18})}),
\end{equation}
\end{lemma}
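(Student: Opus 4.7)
The plan is to compute $M^u(t,x;\lambda) = \partial_\lambda V^u(t,x;\lambda)$ by differentiating the defining identity $V^u(t,x;\lambda) = z(u(t,\lambda);x)$ through the chain rule, then substitute in the Hamilton--Jacobi identity (\ref{2.18}) to replace the $p$-derivative of $z$ by an expression involving the vector fields $g_i$ and the matrix $A$, and finally estimate factor by factor using the constants $C_1, C_2$ and the hypothesis (\ref{2.2}).

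More precisely, I would first apply the chain rule to write
\begin{equation*}
M^u(t,x;\lambda) = \partial_\lambda\bigl(z(u(t,\lambda);x)\bigr) = \partial_p z\bigl(u(t,\lambda);x\bigr)\,\partial_\lambda u(t,\lambda),
\end{equation*}
where $\partial_p z = (\partial_{t_1}z,\ldots,\partial_{t_m}z)$ is the $(n\times m)$ block of $p$-derivatives and $\partial_\lambda u$ is the $(m\times n)$ Jacobian of $u$ in $\lambda$. Next, using Lemma \ref{l:2} with $p = u(t,\lambda)$, I would substitute
\begin{equation*}
\partial_p z\bigl(u(t,\lambda);x\bigr) = -\,\partial_x\bigl(z(u(t,\lambda);x)\bigr)\,\{g_1(x),\ldots,g_m(x)\}\,A\bigl(u(t,\lambda)\bigr),
\end{equation*}
so that
\begin{equation*}
M^u(t,x;\lambda) = -\,\partial_x\bigl(z(u(t,\lambda);x)\bigr)\,\{g_1(x),\ldots,g_m(x)\}\,A\bigl(u(t,\lambda)\bigr)\,\partial_\lambda u(t,\lambda).
\end{equation*}

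To finish, I would take norms and control each factor. The factor $\partial_x(z(p;x))\{g_1(x),\ldots,g_m(x)\}$ is an $(n\times m)$ matrix whose $i$-th column is $\partial_x(z(p;x))g_i(x)$, and each such column is bounded by $C_1$ uniformly for $p\in\bigsqcup$ and $x\in B(x^*,\gamma)$ by (\ref{2.23}); the factor $A(u(t,\lambda))$ is bounded by $C_2$ in view of (\ref{2.24}), since $u(t,\lambda)\in\bigsqcup$; and finally the Jacobian $\partial_\lambda u(t,\lambda)$ is bounded by $K_1$ thanks to (\ref{2.2}). Multiplying these bounds yields (\ref{2.22}).

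There is no serious obstacle here: the whole argument is a one-line chain rule, a one-line substitution from the H--J system, and three pointwise estimates. The only minor care required is to be consistent about the matrix sizes and the choice of matrix norm, so that the submultiplicativity $|M^u| \leq C_1\,C_2\,K_1$ comes out cleanly; the involution hypothesis (\ref{2.10}) is used only implicitly through Lemma \ref{l:2}, which already packages the algebraic representation (\ref{2.6})--(\ref{2.7.5}) into the H--J identity being substituted.
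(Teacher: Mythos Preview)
Your proposal is correct and follows essentially the same route as the paper: chain rule to get $M^u=\partial_p z\cdot\partial_\lambda u$, then the H--J identity (\ref{2.18}) to bound $|\partial_p z|\le C_1C_2$, and finally (\ref{2.2}) to bound $|\partial_\lambda u|\le K_1$. The only cosmetic difference is that the paper states the inequality $|\partial_p z|\le C_1C_2$ directly from (\ref{2.18}) without writing out the intermediate substitution you display, so your version is in fact slightly more explicit.
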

\begin{proof}
By hypothesis, the mapping $z(p;x)=H(p)[x]$ defined in Lemma \ref{l:2} fulfils (H-J) equation (\ref{2.18}) and for an arbitrary $u\in\ua$, we get
\begin{equation}\label{2.25}
M^u(t,x;\l)=\pl_pz(u(t,\l);x)\pl_\l u(t,\l),\,t\geq0,\,\l\in\mbn,\,x\in B(x^*,\g).
\end{equation}

Here $u(t,\l)\in\bigsqcup\subseteq\mathbb{R}^m$ and $|\pl_\l u_i(t,\l)|\leq K_1,\,1\leq i\leq m$, for any $t\geq 0,\,\l\in\mbn$ (see definition of $\ua$ in (\ref{2.2})). On the other hand, using (\ref{2.18}) of Lemma \ref{l:2}, the following inequality is valid
\begin{equation}\label{2.26}
|\pl_p z(u(t;\l);x)|\leq C_1C_2,\,t\geq 0,\,\l\in\mbn,\,x\in B(x^*,\g)
\end{equation}
where the constants $C_1,C_2$ are given in (\ref{2.23}),(\ref{2.24}). A direct computation applied to (\ref{2.25}) leads us to
\begin{equation}\label{2.27}
|M^u(t,x;\l)|\leq C_1C_2K_1,\,t\geq0,\,\l\in\mbn,\,x\in B(x^*,\g)
\end{equation}
and the proof is complete.
\end{proof}
\begin{lemma}\label{l:4}
Assume that $u\in \ua$ and $\{g_1,\ldots,g_m\}\subseteq\mathcal{C}^{\infty}(\mbn;\mbn)$ satisfies (\ref{2.9}) and (\ref{2.10}). Consider $z(p;x)=H(p)[x]$ which verifies (H-J) equations (\ref{2.18}) of Lemma \ref{l:2} and define
\begin{equation}\label{2.28}
V^u(t,x;\l)=z(u(t,\l);x),\,t\in[0,T],\,\l\in\mbn,\,x\in B(x^*,\g)\subseteq\mbn.
\end{equation}
Let $\{\a_{ij}^u(t,\l):\,t\in[0,T],\,\l\in\mbn,\,1\leq i,j\leq m\}$ be the $(m\times m)$ matrix given in \eqref{2.8} and define new bounded variation piecewise right-continuous function $\beta_{i}^{u}(t,\l)=\mathop{\sum}\limits_{j=1}^{m}\a_{ij}^u(t,\l),\,t\in[0,T],\,1\leq i\leq m$. Then $\{V^u(t,x;\l):\,t\in[0,T]\}$ is a bounded variation piecewise right-continuous mapping satisfying the following (H-J) equations with jumps
\begin{equation}\label{2.29}
\left\{
  \begin{array}{ll}
   & d_tV^u(t,x;\l)+\pl_x V^u(t,x;\l)[\mathop{\sum}\limits_{j=1}^{m}g_i(x)d_t\beta_i^u(t,\l)]=0\\
    &V^u(0,x;\l)=x,\,t\in[t_k,t_{k+1}),\,x\in\hbox{Int} B(x^*,\g),\,\l\in\mbn,\,0\leq k\leq N-1
  \end{array}
\right.
\end{equation}
where $[t_k,t_{k+1})\subseteq[0,T],\,0\leq k\leq N-1$, are the continuity intervals of $u\in \ua$.
\end{lemma}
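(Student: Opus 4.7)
The plan is to compute $d_t V^u(t,x;\l)$ on each continuity interval $[t_k,t_{k+1})$ by applying a chain rule for bounded variation compositions to $V^u(t,x;\l)=z(u(t,\l);x)$, then to substitute the Hamilton-Jacobi identity \eqref{2.18} of Lemma \ref{l:2} to eliminate $\pl_p z$, and finally to reorganize the result using the algebraic structure encoded in the matrix $A(p)$ from \eqref{2.7.5} to recover the coefficients $d_t\beta_i^u(t,\l)$.

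First I would fix $x\in\hbox{Int}\,B(x^*,\g)$ and $\l\in\mbn$ and view $t\mapsto V^u(t,x;\l)=z(u(t,\l);x)$ as the composition of the smooth map $p\mapsto z(p;x)$ with the piecewise right-continuous, bounded variation curve $t\mapsto u(t,\l)\in\bigsqcup$. Since $u(\cdot,\l)$ is continuous on the continuity interval $[t_k,t_{k+1})$ and $z(\cdot;x)\in\mathcal{C}^\infty(\bigsqcup;\mbn)$, the Stieltjes chain rule yields, for every $t\in[t_k,t_{k+1})$,
\[
V^u(t,x;\l)-V^u(t_k,x;\l)=\int_{t_k}^{t}\pl_p z(u(s,\l);x)\,d_s u(s,\l),
\]
that is, $d_tV^u(t,x;\l)=\pl_p z(u(t,\l);x)\,d_t u(t,\l)$ in the Riemann-Stieltjes sense. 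Substituting the identity $\pl_p z(p;x)=-\pl_x z(p;x)\{g_1(x),\ldots,g_m(x)\}A(p)$ from \eqref{2.18} at $p=u(t,\l)$ and using $\pl_x V^u(t,x;\l)=\pl_x z(u(t,\l);x)$ produces
\[
d_tV^u(t,x;\l)=-\pl_x V^u(t,x;\l)\,\{g_1(x),\ldots,g_m(x)\}\,A(u(t,\l))\,d_t u(t,\l).
\]

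To finish, I would observe that by \eqref{2.7.5} and the definition \eqref{2.8} the $i$-th entry of the column vector $A(u(t,\l))\,d_t u(t,\l)$ equals $\sum_{j=1}^{m} b_j^i(u_1(t,\l),\ldots,u_{j-1}(t,\l))\,d_t u_j(t,\l)=\sum_{j=1}^{m} d_t\a_{ij}^u(t,\l)=d_t\beta_i^u(t,\l)$; distributing the row-vector product across this sum recovers exactly \eqref{2.29}, while the initial condition $V^u(0,x;\l)=x$ follows from $u(0,\l)=0$ together with $z(0;x)=x$ in \eqref{2.18}. The main obstacle is rigorously justifying the Stieltjes chain rule on each continuity interval: one must verify that for piecewise right-continuous BV data $u(\cdot,\l)$ and a $\mathcal{C}^\infty$ outer function $z(\cdot;x)$, the composition $V^u(\cdot,x;\l)$ is itself piecewise right-continuous of bounded variation and obeys the integral identity term-by-term against $d_t u$. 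On a continuity subinterval this reduces to the classical chain rule for BV functions composed with a $\mathcal{C}^1$ map, and the asserted piecewise right-continuity and bounded variation of $V^u(\cdot,x;\l)$ themselves follow at once because $z(\cdot;x)$ is Lipschitz on the compact parallelepiped $\bigsqcup$ while $u(\cdot,\l)$ is BV and piecewise right-continuous.
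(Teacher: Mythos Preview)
Your proposal is correct and follows essentially the same approach as the paper: apply the chain rule on each continuity interval to obtain $d_tV^u=\pl_p z(u(t,\l);x)\,d_tu(t,\l)$, substitute the Hamilton--Jacobi identity \eqref{2.18} to replace $\pl_p z$ by $-\pl_x z\,\{g_1,\ldots,g_m\}A(p)$, and then recognize the $i$-th entry of $A(u(t,\l))\,d_tu(t,\l)$ as $d_t\beta_i^u(t,\l)$ via \eqref{2.8}. The paper's proof is terser (it simply says ``by a direct computation'' where you supply the Stieltjes chain rule justification), but the logical skeleton is identical.
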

\begin{proof}
By hypothesis, the conclusion \eqref{2.18} of Lemma \ref{l:2} is valid. By a direct computation, we get $V^u(0,x;\l)=x$ and
\begin{equation}\label{2.30}
d_tV^u(t,x;\l)=\mathop{\sum}\limits_{j=1}^{m}\pl_{t_i}z(u(t,\l);x)d_tu_i(t,\l),\,\,t\in[t_k,t_{k+1})\,.
\end{equation}
Using \eqref{2.18}, rewrite \eqref{2.30} as follows
\begin{equation}\label{2.31}
d_t V^u(t,x;\l)=-\pl_x V^u(t,x;\l)\{g_1(x),\ldots,g_m(x)\} A(u(t,\l))\left(
                                                                       \begin{array}{c}
                                                                         d_tu_1(t,\l) \\
                                                                         . \\
                                                                         . \\
                                                                         . \\
                                                                         d_tu_m(t,\l) \\
                                                                       \end{array}
                                                                     \right),
\end{equation}
where $A(p)=[b_1,b_2(t_1),\ldots,b_m(t_1,\ldots,t_{m-1})],\,b_j\in\mathcal{C}^{\infty}(\bigsqcup,\mathbb{R}^m),\,t\in[t_k,t_{k+1})$. Using \eqref{2.8}, write
\begin{equation}\label{2.32}
\a_{ij}^u(t,\l)=\int_{0}^{t}b_j^i\big(u_1(s-,\l),\ldots,u_{j-1}(s-,\l)\big)d_su_j(s,\l),\,1\leq i,j \leq m\,,
\end{equation}
for $t\in[0,T],\,\l\in\mbn$. Rewrite \eqref{2.31} (using \eqref{2.32}) and we get conclusion \eqref{2.29}. The proof is complete.
\end{proof}
\begin{Remark}
The (H-J) equations with jumps satisfied by the unique solution of Problem $(P_2)$ are strongly connected with Lemmas \ref{l:2} and \ref{l:4}. On the other hand, the existence of a solution for Problem $(P_2)$ relies on Lemma \ref{l:3} and it will be analyzed in the next Lemma assuming that $K_1>0$ satisfies
\begin{equation}\label{2.33}
C_1C_2K_1=\rho\in[0,\frac{1}{2}]
\end{equation}
\end{Remark}
\begin{lemma}\label{l:5}
Assume that $u\in\ua$ and $\{g_1,\ldots,g_m\}\subseteq\mathcal{C}^{\infty}(\mbn;\mbn)$ fulfil \eqref{2.9}, \eqref{2.10} and \eqref{2.33}. Then there exists a unique bounded variation piecewise right continuous (of $t\in[0,T]$) mapping $\{\l=\psi^u(t,x)\in B(x^*,2\g):\,t\in[0,T],\,x\in B(x^*,\g)\}$ which is first order continuously differentiable of $x\in int\,B(x^*,\g)$, satisfying integral equations
\begin{equation*}
\left\{
  \begin{array}{ll}
    x^u(t-,\psi^u(t-,x))=x\in B(x^*,\g),\,\,t\in[0,T]\,, \\
    \psi^u(t-,x)=V^u(t-,x;\psi(t-,x)),\,\psi^u(t,x)=V^u(t,x;\psi^u(t-,x)),\,t\in[0,T]\,.
  \end{array}
\right.
\end{equation*}
\end{lemma}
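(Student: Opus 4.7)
The plan is to reformulate the integral equation $x^u(t-,\l)=x$ as a parametric fixed-point problem in $\l$ and apply the Banach contraction principle uniformly in $(t,x)$. Since $H(p)[x]=[G(p)]^{-1}(x)$, the condition $x^u(t-,\l)=G(u(t-,\l))[\l]=x$ is equivalent to $\l=H(u(t-,\l))[x]=V^u(t-,x;\l)$, so for each fixed $t\in[0,T]$ and $x\in B(x^*,\g)$ one seeks a fixed point of the map $F_{t,x}(\l):=V^u(t-,x;\l)$ on the closed ball $B(x^*,2\g)$.

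First, $F_{t,x}$ sends $B(x^*,2\g)$ into itself: by \eqref{2.15}, $H(p)[x]\in B(x^*,2\g)$ whenever $p\in\bigsqcup$ and $x\in B(x^*,\g)$. Second, Lemma \ref{l:3} together with hypothesis \eqref{2.33} yields
\[|\pl_\l V^u(t-,x;\l)|=|M^u(t-,x;\l)|\le C_1C_2K_1=\rho\le \tfrac{1}{2}<1,\]
so $F_{t,x}$ is a $\rho$-contraction on $B(x^*,2\g)$ with constant uniform in $(t,x)$. Banach's fixed point theorem then provides a unique $\psi^u(t-,x)\in B(x^*,2\g)$ with $\psi^u(t-,x)=V^u(t-,x;\psi^u(t-,x))$, equivalently $x^u(t-,\psi^u(t-,x))=x$; one then \emph{defines} $\psi^u(t,x):=V^u(t,x;\psi^u(t-,x))$.

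For the bounded-variation and piecewise right-continuity in $t$, I would estimate at $s<t$, writing $\l_0:=\psi^u(s-,x)$, as
\[|\psi^u(t-,x)-\psi^u(s-,x)|\le \rho\,|\psi^u(t-,x)-\psi^u(s-,x)|+|V^u(t-,x;\l_0)-V^u(s-,x;\l_0)|,\]
which gives $(1-\rho)|\psi^u(t-,x)-\psi^u(s-,x)|\le L\,|u(t-,\l_0)-u(s-,\l_0)|$, where $L$ is a Lipschitz constant for $p\mapsto H(p)[x]$ on the compact set $\bigsqcup$. Because each coordinate $u_i(\cdot,\l)$ is piecewise right-continuous of bounded variation by \eqref{2.9} (uniformly for $\l$ in the compact set $B(x^*,2\g)$), these properties transfer to $\psi^u(\cdot-,x)$ on every continuity interval $[t_k,t_{k+1})$, and the post-jump formula then endows $\psi^u(\cdot,x)$ itself with right-continuity at the partition points.

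For the $\mathcal{C}^1$ dependence on $x\in\mathrm{int}\,B(x^*,\g)$, I would apply the implicit function theorem to the $\mathcal{C}^1$ map $(\l,x)\mapsto \l-V^u(t-,x;\l)=\l-H(u(t-,\l))[x]$: its $\l$-partial $I-M^u(t-,x;\l)$ is invertible via Neumann series since $|M^u|\le\rho\le\tfrac{1}{2}$, while its $x$-partial $-\pl_xH(u(t-,\l))[x]$ is continuous by smoothness of $H$. The main technical obstacle I anticipate is the careful bookkeeping of left and right limits at jump times of $u$: the contraction argument must be run through $V^u(t-,\cdot;\cdot)$ in order to produce the left-continuous fixed point $\psi^u(t-,x)$, and then $\psi^u(t,x)$ is read off from the explicit post-jump formula (it is generally \emph{not} a fixed point of $V^u(t,x;\cdot)$). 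The uniformity of $\rho$ and of the invariance \eqref{2.15} in $(t,x,\l)$ is precisely what allows all these pieces to be glued together coherently.
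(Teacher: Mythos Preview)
Your proposal is correct and follows essentially the same route as the paper: rewrite $x^u(t,\l)=x$ as the fixed-point equation $\l=V^u(t,x;\l)$, invoke the bound $|M^u|\le C_1C_2K_1=\rho\le\tfrac12$ from Lemma~\ref{l:3}, and apply the contraction principle on $B(x^*,2\g)$. The only cosmetic differences are that the paper writes out the Picard iterates $\l_{k+1}(t,x)=V^u(t,x;\l_k(t-,x))$ explicitly and verifies the invariance of $B(x^*,2\g)$ via the estimate $|\l_1-\l_0|\le\g/2$ together with the geometric series, rather than citing \eqref{2.15} directly; and the paper does not spell out the bounded-variation and $\mathcal C^1$-in-$x$ arguments, which you supply in more detail than the original.
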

\begin{proof}
By hypothesis, the conclusion of Lemma \ref{l:3} is valid for $V^u(t,x;\l)=z(u(t,\l);x),\,t\geq0,\,\l\in\mbn,\,x\in B(x^*,\g)$. Notice that $x^u(t,\l)=x$ can be rewritten as
\begin{equation}\label{2.34}
\l=V^u(t,x;\l),\,\,t\in[0,T],\,x\in B(x^*,\g)\,,
\end{equation}
where the $(n\times n)$ matrix $\pl_\l V^u(t,x;\l)=M^u(t,x;\l)$ fulfils the conclusion \eqref{2.22}, i.e
\begin{equation}\label{2.35}
|M^u(t,x;\l)|\leq C_1C_2K_1,\,\,t\geq0,\,\l\in\mbn,\,x\in B(x^*,\g)\,.
\end{equation}
Assuming that $K_1>0$ is sufficiently small such that
\begin{equation}\label{2.36}
\rho=C_1C_2K_1\in[0,\frac{1}{2}]\,\,(\hbox{see}\,\eqref{2.33})\,,
\end{equation}
then the contraction mapping theorem can be applied for solving integral equations \eqref{2.34}. Construct the convergent sequence $\{\l_k(t,x):\, t\in[0,T],\,x\in B(x^*,\g)\}$ such that
\begin{equation}\label{2.37}
\left\{
  \begin{array}{ll}
    &\l_k(t,x)\in B(x^*,2\g),\,\,\l_0(t,x)=x\\
    &\l_{k+1}(t,x)=V^u(t,x;\l_k(t-,x)),\,k\geq0\,.
  \end{array}
\right.
\end{equation}
Notice that the following estimates are valid
\begin{equation}\label{2.38}
\left\{
  \begin{array}{ll}
    &|\l_{k+1}(t,x)-\l_k(t,x)|\leq \rho^k|\l_1(t-,x)-\l_0(t,x)|,\,k\geq0 \\
    &|\l_{1}(t,x)-\l_0(t,x)|\leq \max\{|G(p)[x]-x|:\,x\in B(x^*,\g)\}\leq\frac{\g}{2}\,,
  \end{array}
\right.
\end{equation}
(see \eqref{2.4}) which lead us to
\begin{equation}\label{2.39}
|\l_k(t,x)-x|\leq\frac{1}{1-\rho}(\frac{\g}{2})\leq\g,\,\,t\in[0,T],\,x\in B(x^*,\g)\,,\,k\geq0\,.
\end{equation}
Combining \eqref{2.38} and \eqref{2.39}, we get $\{\l_k(t,x)\}_{k\geq0}$ fulfils \eqref{2.37} and passing $k\rightarrow\infty$ in \eqref{2.37}, we obtain
\begin{equation}\label{2.40}
\psi^u(t,x)=\mathop{\lim}\limits_{k\rightarrow\infty}\l_k(t,x)\in B(x^*,2\g),\,t\in[0,T],\,x\in B(x^*,\g)
\end{equation}
satisfying integral equations
\begin{equation}\label{2.41}
\left\{
  \begin{array}{ll}
    &\psi^u(t,x)=V^u(t,x;\psi(t-,x))\,, \\
    &\psi^u(t-,x)=V^u(t-,x;\psi(t-,x))\,,\, t\in[0,T],\,x\in B(x^*,\g)\}\,.
  \end{array}
\right.
\end{equation}
The proof is complete.
\end{proof}
\section{Main Theorems}
With the same notations as in Section 1, we reconsider the problems $P_1$ and $P_2$ in more general setting. Consider the local flow $\{G_0(t)[x]:\,|t|\leq T,\,x\in\b3\subseteq \mbn\}$ generated by a complete vector field $g_0\in\mathcal{C}^{\infty}(\mbn;\mbn)$. Assume that
\begin{equation*}
I_1=\left\{
  \begin{array}{ll}
     &\{g_1,\dots,g_m\}\subseteq\mathcal{C}^{\infty}(\mbn;\mbn)\,\hbox{satisfies \eqref{2.10} and }\,[g_0,g_i]=0,\,1\leq i\leq m,\\
    &\hbox{for any}\,\,x\in\b3.
  \end{array}
\right.
\end{equation*}
\begin{equation*}
I_2=\left\{
  \begin{array}{ll}
     &u\in\ua\,\,\hbox{fulfils}\,\eqref{2.9}\,\,\hbox{where}\,\,T>0\,\,\hbox{and}\,\,\bigsqcup=\mathop{\prod}\limits_{1}^{m}[-a_i,a_i]\subseteq\mathbb{R}^n\,,\\
    &\hbox{are fixed such that} \,|G_0(t[x])-x|\leq\frac{\g}{2(m+1)},\,\,|G_i(t_i)[x]-x|\leq\frac{\g}{2(m+1)}\,.
  \end{array}
\right.
\end{equation*}
Let $[t_k,t_{k+1})\subseteq[0,T],\,0\leq k\leq N-1$, be the continuity intervals of $u\in\ua$.\\
\textbf{Problem ($R_1$)}. Under the hypothesis ($I_1$) and $(I_2)$, describe the evolution of the gradient flow with jumps
\begin{equation}\label{3.1}
\{y^u(t,\l)\mathop{=}\limits^{\hbox{def}}G_0(t)\circ G(u(t,\l))[\l]:\,t\in[0,T],\,\l\in B(x^*,2\g)\subseteq\mbn\}
\end{equation}
(see $G(p)=G_1(t_1)\circ\ldots\circ G_m(t_m)$) as a solution of the following system with jumps
\begin{equation}
\left\{
  \begin{array}{ll}\label{3.2}
     d_t y^u(t,\l)=g_0(y^u(t,\l))dt+\mathop{\sum}\limits_{i=1}^{m}g_i(y^u(t,\l))dt\beta_i^u(t,\l),\,y^u(t,\l)\in B(x^*,3\g)\,,\\
    y^u(0,\l)=\l\in B(x^*,2\g),\,t\in[t_k,t_{k+1}],\,\beta_i^u(t,\l)\mathop{=}\limits^{\hbox{def}}\mathop{\sum}\limits_{j=1}^{m}\a_{ij}^u(t,\l)\,,\,1\leq k\leq N-1\,,
  \end{array}
\right.
\end{equation}
where the matrix $\{a_{ij}^u(t,\l):\,1\leq i,j\leq m\}$ is defined in \eqref{2.8}.
\begin{Theorem}\label{t.1}
Assume that $\{g_0,g_1,\ldots,g_m\}\subseteq \mathcal{C}^{\infty}(\mbn;\mbn)$ and $u\in \ua$ fulfil the hypothesis $(I_1)$ and $(I_2)$. Then the gradient flow with jumps $\{y^u(t,\l)\}$ defined in \eqref{3.1} verifies $y^u(t,\l)\in\b3$ and is a solution of the system \eqref{3.2}.
\end{Theorem}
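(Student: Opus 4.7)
The statement splits into two parts: the range claim $y^u(t,\lambda)\in B(x^*,3\gamma)$, and the jump ODE \eqref{3.2} on each continuity interval. For the range claim, I would simply iterate the localization estimates in $(I_2)$: starting from $\lambda\in B(x^*,2\gamma)$, each successive $G_i(u_i(t,\lambda))$ moves the argument by at most $\gamma/2(m+1)$, so after $m$ compositions $G(u(t,\lambda))[\lambda]$ lies within distance $m\gamma/2(m+1)$ of $\lambda$, hence in $B(x^*,2\gamma+m\gamma/2(m+1))\subseteq B(x^*,3\gamma)$; a final application of $G_0(t)$ adds at most another $\gamma/2(m+1)$, so $y^u(t,\lambda)\in B(x^*,3\gamma)$ as claimed. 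This also ensures all compositions are well-defined.

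For the ODE, write $x^u(t,\lambda)=G(u(t,\lambda))[\lambda]$ so that $y^u(t,\lambda)=G_0(t)[x^u(t,\lambda)]$. On a continuity interval $[t_k,t_{k+1})$, Lemma~2.2 (the $P_1$ lemma) gives
\begin{equation*}
d_t x^u(t,\lambda)=\sum_{i=1}^m g_i\bigl(x^u(t,\lambda)\bigr)\,d_t\beta_i^u(t,\lambda),
\end{equation*}
while $G_0$ contributes a smooth $t$-dependence through the flow equation $\partial_t G_0(t)[x]=g_0(G_0(t)[x])$. Applying the chain rule to $y^u=G_0(t)[x^u]$ yields
\begin{equation*}
d_t y^u(t,\lambda)=g_0\bigl(y^u(t,\lambda)\bigr)\,dt+\partial_x G_0(t)[x^u(t,\lambda)]\cdot\sum_{i=1}^m g_i(x^u(t,\lambda))\,d_t\beta_i^u(t,\lambda).
\end{equation*}

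The decisive step is to eliminate the Jacobian $\partial_x G_0(t)$. Hypothesis $(I_1)$ forces $[g_0,g_i]=0$ on $B(x^*,3\gamma)$, and the standard fact that a flow pushes forward a commuting vector field to itself gives
\begin{equation*}
\partial_x G_0(t)[x]\,g_i(x)=g_i\bigl(G_0(t)[x]\bigr),\qquad 1\le i\le m,\ x\in B(x^*,3\gamma).
\end{equation*}
I would record this by differentiating $\tfrac{d}{dt}\bigl(\partial_x G_0(t)[x]g_i(x)-g_i(G_0(t)[x])\bigr)$; using $\partial_t G_0=g_0\circ G_0$ and $[g_0,g_i]=0$ one checks the derivative equals $\partial g_0(G_0(t)[x])\cdot(\cdot)$ applied to the same quantity, which vanishes at $t=0$, so it vanishes identically by uniqueness of ODEs. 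Substituting this identity into the chain rule expression gives
\begin{equation*}
d_t y^u(t,\lambda)=g_0(y^u(t,\lambda))\,dt+\sum_{i=1}^m g_i(y^u(t,\lambda))\,d_t\beta_i^u(t,\lambda),
\end{equation*}
which is precisely \eqref{3.2}; the initial condition $y^u(0,\lambda)=\lambda$ is immediate from $G_0(0)=\mathrm{id}$ and $u(0,\lambda)=0$.

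The main potential obstacle is the bookkeeping around jumps: the bounded-variation functions $\beta_i^u(t,\lambda)$ need not be absolutely continuous in $t$, so the chain-rule manipulation really happens at the level of the Stieltjes differential $d_t\beta_i^u$ restricted to each $[t_k,t_{k+1})$ together with the smooth differential $g_0(y^u)\,dt$. But because the time-dependence coming from $G_0(t)$ is smooth and the only source of jumps is $u$ (which is already handled in Lemma~2.2), this combination is unproblematic on each continuity interval, so no extra argument beyond the commutation identity is needed.
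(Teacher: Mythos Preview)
Your proof is correct. The paper organizes the computation slightly differently: rather than keeping the composition $y^u=G_0(t)[x^u]$ and invoking Lemma~2.2 for the inner factor, it first uses the commutativity of the flows $G_0$ and $G_i$ (a consequence of $[g_0,g_i]=0$ in $(I_1)$) to rewrite $y^u(t,\lambda)=G(u(t,\lambda))\circ G_0(t)[\lambda]$ with $G_0$ on the inside, and then differentiates directly, applying the algebraic representation \eqref{2.6}--\eqref{2.7.5} to the outer $G$ factor to obtain \eqref{3.5}. The two arguments are dual uses of the same hypothesis: you exploit $[g_0,g_i]=0$ \emph{after} differentiating, via the push-forward identity $\partial_x G_0(t)[x]\,g_i(x)=g_i(G_0(t)[x])$, while the paper exploits it \emph{before} differentiating, by swapping the order of the flows. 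Your route has the advantage of reusing Lemma~2.2 as a black box; the paper's ordering, on the other hand, makes it more transparent how to relax $[g_0,g_i]=0$ to the weaker involution condition \eqref{3.6} handled in the subsequent remark.
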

\begin{proof}
By hypothesis, the gradient flow with jumps $\{y^u(t,\l)\}$ can be rewritten $y^u(t,\l)=G(u(t,\l))\circ G_{0}(t)[\l]$ and using $(I_2)$, we get $y^u(t,\l)\in\b3\,,\,t\in[0,T]\,,\,\l\in B(x^*,2\g)$. On the other hand, a direct computation applied to $\{y^u(t,\l)\}$ lead us to the following equations
\begin{equation}\label{3.3}
d_t y^u(t,\l)=g_0\big( y^u(t,\l)\big)dt+\mathop{\sum}\limits_{i=1}^{m}\pl_{t_i}G(u(t,\l))[G_0(t)(\l)]d_tu_i(t,\l),\,t\in[t_k,t_{k+1})
\end{equation}
where $y(p;\mu)\mathop{=}\limits^{\hbox{def}} G(p)[\mu]$ satisfies an integrable gradient system (see \eqref{2.5}),
\begin{equation}\label{3.4}
\pl_{t_1}y=g_1(y),\,\pl_{t_{2}}y=Y_2(t;y),\ldots,\pl_{t_m}=Y_m(t_1,\ldots,t_{m-1};y),
\end{equation}
fulfilling the algebraic representation given in \eqref{2.6} and \eqref{2.7.5}. As a consequence, we may and do rewrite the second term in the right-hand side of \eqref{3.3} as a follows
\begin{equation}\label{3.5}
\mathop{\sum}\limits_{i=1}^{m}\pl_{t_i}G(u(t,\l))[G_0(t)(\l)]d_tu_i(t,\l)=\mathop{\sum}\limits_{i=1}^{m}g_i(y^u(t,\l))d_t\beta_i^u(t,\l),\,t\in[t_k,t_{k+1}),
\end{equation}
$\l\in B(x^*,2\g),\,0\leq k\leq N-1$ where $\beta_i^u(t,\l)\mathop{=}\limits^{\hbox{def}}\mathop{\sum}\limits_{j=1}^{m}\a_{ij}^u(t,\l)$ and the matrix $\{\a_{ij}^u(t,\l):\,1\leq i,j\leq m\}$ is defined in \eqref{2.8}. The proof is complete.
\end{proof}
\begin{Remark}
The evolution of the gradient flow defined in \eqref{3.1} satisfies the same system with jumps given in \eqref{3.2} if the commutative condition $[g_0,g_i](x)=0,\,1\leq i\leq m,\,x\in\b3\subseteq\mbn$, assumed in $(I_1)$ is replaced by
\begin{equation}\label{3.6}
[g_0,g_i](x)=\mathop{\sum}\limits_{k=1}^{m}\gamma_k^ig_k(x),\,x\in\b3,\,1\leq i\leq m\,,
\end{equation}
where $\gamma_k^i\in\mathbb{R}$ are same constants.

The only change which appears is reflected in the algebraic representation corresponding to the gradient integrable system associated with smooth mapping
\begin{equation}\label{3.7}
y(t,p)[\l]=G_0(t)\circ G(p)[\l],\,|t|\leq T,\,p=(t_1,\ldots,t_m)\in\bigsqcup,\,\l\in B(x^*,2\g)\,.
\end{equation}
We get
\begin{equation}\label{3.8}
\pl_ty=g_0(y),\,\pl_{t_1}y=Y_1(t;y),\,\pl_{t_2}y=Y_2(t,t_1;y),\ldots,\pl_{t_m}y=Y_m(t,t_1,\ldots,t_{m-1};y)\,.
\end{equation}
\begin{equation}\label{3.9}
\{g_0,Y_1(t),Y_2(t,t_1),\ldots,Y_m(t,t_1,\ldots,t_{m-1})\}(y)=\{g_0,g_1,\ldots,g_m\}(y) V(t,\mu)
\end{equation}

This time, the $(m+1)\times(m+1)$ analytic matrix $V(t,p)$ has the following structure
\begin{equation}\label{3.10}
\left\{
  \begin{array}{ll}
   V(t,p)=[V_0,V_1(t),V_2(t,t_1),\ldots,V_m(t,t_1,\ldots,t_{m-1})],\,V_j\in\mathbb{R}^{m+1} \\
\\
    V_0^1=\left(
             \begin{array}{c}
               1 \\
               0 \\
               . \\
               . \\
               . \\
0\\
             \end{array}
           \right)\,\hbox{and}\,\,V_i(t,t_1,\ldots,t_{i-1})=\left(
                                                              \begin{array}{c}
                                                                0 \\
                                                                b_i(t,t_1,\ldots,t_{i-1}) \\
                                                              \end{array}
                                                            \right),\,b_i\in\mathbb{R}^m,
  \end{array}
\right.
\end{equation}
$1\leq i\leq m$. A standard computation applied to $y^u(t,\l)=y(t,u(t,\l))[\l]=G_0(t)\circ G(u(t,\l))[\l],\,t\in [0,T]$, leads to
\begin{equation}\label{3.11}
\left\{
  \begin{array}{ll}
    d_ty^u(t,\l)&=\pl_ty(t,u(t,\l))[\l]dt +\mathop{\sum}\limits_{i=1}^{m}\big(\pl_{t_i}y(t,u(t,\l))[\l]\big)d_tu_i(t,\l)\\
    &=g_0(y^u(t,\l))dt+\mathop{\sum}\limits_{i=1}^{m}g_i(y^u(t,\l))d_t\beta_i^u(t,\l),\,t\in[t_k,t_{k+1})
  \end{array}
\right.
\end{equation}
$0\leq k\leq N-1$, where $\beta_i^u(t,\l)=\mathop{\sum}\limits_{j=1}^{m}\a_{ij}^u(t,\l)$ and
$$\a_{ij}^u(t,\l)\mathop{=}\limits^{\hbox{def}}\int_{0}^{t}b_j^i(s,u_1(s-,\l),\ldots,u_{i-1}(s-,\l))d_su_j(s,\l),\,\,1\leq i,j\leq m\,.$$
Here $\{b_1(t),b_2(t,t_1),\ldots,b_m(t,t_1,\ldots,t_{m-1})\}\subseteq \mathbb{R}^m$ are given in \eqref{3.10}.\\
\textbf{Problem ($R_2$)}. Under the hypothesis $(I_1)$ and $(I_2)$ and for $K_1>0$ sufficiently small (see \eqref{2.33} of Lemma \ref{l:5}) prove that the integral equation with respect to $\l\in B(x^*,2\g)$ (see \eqref{3.1})
\begin{equation}\label{3.12}
y^u(t,\l)\mathop{=}\limits^{\hbox{def}} G_0(t)\circ G(u(t,\l))[\l]=x\in B(x^*,\g),\,t\in[0,T]
\end{equation}
has a unique bounded variation and piecewise right continuous solution $\{\l=\psi^u(t,x)\in B(x^*,2\g):\,t\in[0,T]\}$ of \eqref{3.12} which is first order continuously differentiable of $x\in\hbox{int} B(x^*,\g)$.

In addition, the following equations are valid
\begin{equation}\label{3.13}
\left\{
  \begin{array}{ll}
    V^u(t,x;\l)\mathop{=}\limits^{\hbox{def}} H(u(t,\l))[G_0(-t;x)],\,H(p)=[G(p)]^{-1}\,,\,t\in[0,T]\,,\,p\in\bigsqcup\\
   \psi^u(t-,x)=V^u(t-,x;\psi(t-,x))\,,\,\psi^u(t,x)=V^u(t,x,\psi(t-,x))\,,\\
y^u(t-,\psi^u(t-,x))=x\in B(x^*,\g)\,\,,\,\,\hbox{for any}\,\,t\in[0,T]\,.
  \end{array}
\right.
\end{equation}
Define the following two constants
\begin{equation}\label{3.14}
\left\{
  \begin{array}{ll}
    C_1\mathop{=}\limits^{\hbox{def}}\max\{|\pl_y(z(p;y))g_i(y)|:\,p\in\bigsqcup,\,y\in B(x^*,2\g)\}\\
   C_2\mathop{=}\limits^{\hbox{def}}\max\{|A(p)|:\,p\in\bigsqcup\}
  \end{array}
\right.
\end{equation}
where $z(p;y)=H(p)[y]$ and the analytic $(m\times m)$ matrix $A(p)$ is given in \eqref{2.7}. Assume that $K_1>0$ used in the definition of admissible set $\ua$ (see \eqref{2.2}) satisfies
\begin{equation}\label{3.15}
C_1C_2K_1=\rho\in[0,\frac{1}{2}]\,.
\end{equation}
\end{Remark}
\begin{Theorem}\label{t:2}
Assume that $\{g_0,g_1,\ldots,g_m\}\subseteq\mathcal{C}^{\infty}(\mbn,\mbn)$ and $u\in\ua$
 fulfil the hypothesis $(I_1),\,(I_2)$ and \eqref{3.15}. Then there exists a unique bounded variation and piecewise right-continuous solution $\{\l=\psi^u(t,x)\in B(x^*):\, t\in[0,T]\}$ of \eqref{3.12} which is first order continuously differentiable of $x\in\hbox{int} B(x^*,\g)$ such that the integral equations \eqref{3.13} are satisfied. In addition $V(t,x)\mathop{=}\limits^{\hbox{def}} V^u(t,x;\l)$ satisfy the following system of (H-J) equations with jumps
\begin{equation}\label{3.16}
d_t V(t,x)+\pl_x V(t,x)[g_0(x)dt+\mathop{\sum}\limits_{i=1}^{m}g_i(x)d_t\beta_i^u(t,\l)]=0,\,x\in \hbox{int}B(x^*,\g)\,,
\end{equation}
$t\in[t_k,t_{k+1}],\,0\leq k\leq N-1$ where $\beta_i^u(t,\l),\,1\leq i\leq m$, are defined in \eqref{3.2} (see Problem $(R_1)$).
\end{Theorem}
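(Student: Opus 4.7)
The plan is to reduce the integral equation \eqref{3.12} to a fixed-point problem for $\l$, run a contraction argument modeled on Lemma \ref{l:5}, and then derive the Hamilton-Jacobi equation with jumps \eqref{3.16} by combining the computation of Lemma \ref{l:4} with the commutativity $[g_0,g_i]=0$ supplied by $(I_1)$.

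First I would invert $G_0(t)$ in $y^u(t,\l)=G_0(t)\circ G(u(t,\l))[\l]=x$ to obtain $G(u(t,\l))[\l]=G_0(-t)[x]$, and then apply $H(u(t,\l))=G(u(t,\l))^{-1}$ to reach the fixed-point form $\l = H(u(t,\l))[G_0(-t)[x]] =: V^u(t,x;\l)$ announced in \eqref{3.13}. Under $(I_2)$, one verifies $G_0(-t)[x]\in B(x^*,\g+\g/(2(m+1)))$ and then $V^u(t,x;\l)\in B(x^*,2\g)$ whenever $x\in B(x^*,\g)$, so $V^u$ maps the target ball to itself. The chain rule gives
\begin{equation*}
\pl_\l V^u(t,x;\l) = \pl_p z(u(t,\l); G_0(-t)[x])\cdot \pl_\l u(t,\l),
\end{equation*}
and \eqref{3.14}--\eqref{3.15} together with the $K_1$-Lipschitz bound on $u(t,\cdot)$ yield $|\pl_\l V^u|\leq C_1 C_2 K_1 = \rho\leq 1/2$; note that $C_1$ in \eqref{3.14} is taken on $B(x^*,2\g)$ precisely to absorb the small displacement $G_0(-t)[x]-x$. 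I would then construct the iterates $\l_0(t,x)=x$, $\l_{k+1}(t,x) = V^u(t,x;\l_k(t-,x))$ exactly as in Lemma \ref{l:5}, producing a geometric Cauchy sequence whose limit $\psi^u(t,x)$ satisfies the integral equations \eqref{3.13}. Bounded variation and piecewise right-continuity in $t$ pass to the limit because each $\l_k$ enjoys them with a uniform bound (the contraction adds at most $\rho$ times the previous total variation plus a fixed multiple of the variation of $u$ per iterate), and $C^1$ regularity in $x\in\hbox{int}\,B(x^*,\g)$ follows from the implicit function theorem applied to $F(t,x,\l) = \l - V^u(t,x;\l)$, whose $\l$-derivative $I-\pl_\l V^u$ is uniformly invertible since $\|\pl_\l V^u\|\leq\rho<1$.

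The bulk of the work is deriving \eqref{3.16}. I would start from $V^u(t,x;\l) = H(u(t,\l))[G_0(-t)[x]]$ and differentiate in $t$ on a continuity interval $[t_k,t_{k+1})$, obtaining
\begin{equation*}
d_t V^u = -\pl_y z(u(t,\l); G_0(-t)[x])\, g_0(G_0(-t)[x])\, dt + \mathop{\sum}\limits_{i=1}^{m}\pl_{t_i} z(u(t,\l); G_0(-t)[x])\, d_t u_i(t,\l).
\end{equation*}
The decisive use of $(I_1)$ is that $[g_0,g_i]=0$ yields $\pl_x G_0(-t)[x]\cdot g_i(x) = g_i(G_0(-t)[x])$ for $i=0,1,\ldots,m$ (each $g_i$ is preserved by the flow of $g_0$). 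Combined with $\pl_x V^u = \pl_y z\cdot \pl_x G_0(-t)[x]$, this rewrites the first summand above as $-\pl_x V^u(t,x;\l)\, g_0(x)\, dt$. For the second summand, Lemma \ref{l:2} substitutes $\pl_p z = -\pl_y z\cdot\{g_1,\ldots,g_m\}(y)A(p)$ at $y=G_0(-t)[x]$; the same pushforward identity then converts $\{g_i(G_0(-t)[x])\}$ into $\pl_x G_0(-t)[x]\,\{g_i(x)\}$, and the identification of $A(u(t,\l))\,d_t u(t,\l)$ with the vector $(d_t\beta_i^u(t,\l))_{1\leq i\leq m}$ via \eqref{2.8} and \eqref{2.32}, already carried out in Lemma \ref{l:4}, turns this summand into $-\pl_x V^u\sum_i g_i(x)\, d_t\beta_i^u(t,\l)$. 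Adding the two contributions gives \eqref{3.16}.

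The main obstacle is precisely this last step: producing $g_0(x)$ and $g_i(x)$ at the argument $x$ rather than at the conjugated point $G_0(-t)[x]$ depends essentially on the commutativity built into $(I_1)$; without it the clean HJ form \eqref{3.16} would be replaced by the matrix-augmented equation foreshadowed in \eqref{3.6}--\eqref{3.11}, using the $(m+1)\times(m+1)$ coefficient $V(t,p)$ from \eqref{3.10} in place of $A(p)$.
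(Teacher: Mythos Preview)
Your proposal is correct and follows essentially the same line as the paper's proof. The only organizational difference is that the paper introduces the auxiliary $\widehat{V}^u(t,y;\l)=H(u(t,\l))[y]$ on $B(x^*,\g_1)$ with $\g_1=\g(1+\tfrac{1}{2(m+1)})$ so that Lemmas \ref{l:4} and \ref{l:5} can be quoted verbatim, then recovers $V^u(t,x;\l)=\widehat{V}^u(t,G_0(-t)(x);\l)$ and $\psi^u(t,x)=\widehat{\psi}^u(t,G_0(-t)(x))$ by composition; you instead run the contraction directly on $V^u$ and redo the Lemma \ref{l:4} computation with the $G_0(-t)$ insertion. The key step---using $[g_0,g_i]=0$ to convert $\pl_y z\cdot g_i(G_0(-t)[x])$ into $\pl_x V^u\cdot g_i(x)$---is exactly the paper's \eqref{3.24}, and your observation that $C_1$ in \eqref{3.14} is taken over $B(x^*,2\g)$ precisely to absorb the displacement $G_0(-t)[x]$ is on point.
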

\begin{proof}
Define $\widehat{V}^u(t,y;\l)=H(u(t,\l))[y] $, where $H(p)=[G(p)]^{-1}\,,\,t\in[0,T],\,p\in\bigsqcup$ and $y\in B(x^*,2\g)$. The integral equation \eqref{3.12} can be replaced by the following
\begin{equation}\label{3.17}
\l=\widehat{V}^u(t,y_0(t,x);\l)\mathop{=}\limits^{\hbox{not}} V^u(t,x;\l),
\end{equation}
where $y_0(t,x)=G_0(-t)(x)\in B(x^*,\g_1)$, for any $t\in[0,T]$, $x\in B(x^*,\g)$ and $\g_1=\g(1+\frac{1}{2(m+1)})$ (see $(I_2)$).
It allows us to get the unique solution $\l=\psi^u(t,x)$ as a composition
\begin{equation}\label{3.18}
\psi^u(t,x)=\widehat{\psi}^u(t,y_0(t,x)),\,t\in[0,T],\,\,x\in B(x^*,\g)\,,
\end{equation}
where $\l=\widehat{\psi}^u(t,y),\,t\in[0,T],\,\,y\in B(x^*,\g_1)$, is the unique solution of the following integral equations
\begin{equation}\label{3.19}
\l=\widehat{V}^u(t,y;\l),\,\l\in B(x^*,2\g)\,,y\in B(x^*,\g_1),\,\,t\in[0,T]\,.
\end{equation}
By hypothesis, the mapping $\{\widehat{V}^u(t,y;\l),\,\,t\in[0,T]\,,\,y\in B(x^*,\g_1)\}$ fulfils the hypothesis \eqref{2.9}, \eqref{2.10} and \eqref{2.33} of Lemmas \ref{l:4} and \ref{l:5} for any $\l\in \mbn$. We get the corresponding (H-J) equations (see \eqref{2.29} of Lemma \ref{l:4}).
\begin{equation}\label{3.20}
\left\{
  \begin{array}{ll}
    d_t\widehat{V}^u(t,y;\l)+\pl_y\widehat{V}(t,y;\l)[\mathop{\sum}\limits_{i=1}^mg_i(y)d_t\beta_i^u(t,\l)]=0  \\
     \widehat{V}^u(0,y;\l)=y,\,\,t\in[t_k,t_{k+1}],\,y\in\hbox{int} B(x^*,\g_1),\,\,\l\in\mbn\,,\,0\leq k\leq N-1\,.
  \end{array}
\right.
\end{equation}
In addition, there exists a unique bounded variation piecewise right-continuous (of $t\in[0,T]$) mapping $\{\widehat{\l}=\widehat{\psi}^u(t,y)\in B(x^*,2\g):\,t\in[0,T],\,y\in B(x^*,\g_1)\}$ (see Lemma \ref{l:5})
\begin{equation}\label{3.21}
\left\{
  \begin{array}{ll}
    \widehat{\psi}^u(t-,y)=\widehat{V}^u(t-,y;\widehat{\psi}^u(t-,y)),\,t\in[0,T],\,\,y\in B(x^*,\g_1)  \\
     \widehat{\psi}^u(t,y)=\widehat{V}^u(t,y;\widehat{\psi}^u(t-,y)),\,t\in[0,T],\,\,y\in B(x^*,\g_1)\,,
  \end{array}
\right.
\end{equation}
notice that $\widetilde{\l}=\widetilde{\psi}^u(t,y)$ is first order continuously differentiable of $y\in\hbox{int} B(x^*,\g_1)$ and, using \eqref{3.20} and \eqref{3.21}, we get the corresponding equations satisfied by\\ $\l=\psi^u(t,x)\mathop{=}\limits^{\hbox{def}}\widehat{\psi}^u(t,G_0(-t)(x)),\,\,t\in[0,T]$, as follows
\begin{equation}\label{3.22}
    \psi^u(t-,y)=V^u(t-,x;\widehat{\psi}^u(t-,y)),\psi^u(t,x)=V^u(t,x;\widehat{\psi}^u(t-,y))\,t\in[0,T],
      \end{equation}
$\,\,\,\forall\, x\in B(x^*,\g_1)$ where $\widehat{V}(t,x;\l)=\widehat{V}^u(t,G_0(-t)(x);\l)=H(u(t,\l))[G_0(-t)(x)]$. The equations \eqref{3.22} stands for integral equation \eqref{3.13} and the first conclusion is proved. For the (H-J) equations \eqref{3.16}, we notice that
\begin{equation}\label{3.23}
    d_tV^u(t,x;\l)=d_tV^u(t,G_0(-t)(x);\l)-\pl_y \widehat{V}^u(t,G_0(-t)(x);\l)g_0(G_0(-t)(x))dt,
      \end{equation}
for any $t\in[t_k,t_{k+1}),\,0\leq k\leq N-1$. In addition, using $(I_1)$, we get
\begin{equation}\label{3.24}
\left\{
  \begin{array}{ll}
    \pl_y\widehat{V}^u(t,G_0(-t)(x);\l)g_i(G_0(-t)(x))=\pl_x V^u(t,x;\l)[\pl_x(G_0(-t)(x))]^{-1}g_i(G_0(-t))  \\
     =\pl_x V^u(t,x;\l)g_i(x),\,t\in[0,T],\,\,0\leq i\leq m\,.
  \end{array}
\right.
\end{equation}
Rewrite \eqref{3.23} (using \eqref{3.24} and \eqref{3.20}) we get (H-J) equations \eqref{3.16}. The proof is complete.
\end{proof}
\begin{Theorem}\label{t:3}
Under the hypothesis of Theorem \ref{t:2} and assume that $u\in\ua$ is continuously differentiable on each continuity interval $[t_k,t_{k+1})\subseteq[0,T],\,0\leq k\leq N-1$. Then the unique solution $\{\l=\psi^u(t,x)\in B(x^*,2\g):\,[t_k,t_{k+1}),\,x\in\hbox{int} B(x^*,\g)\}$ of integral equations \eqref{3.12} (see \eqref{3.13} also) satisfies the following (H-J) equations
\begin{equation}\label{3.25}
   \pl_t \psi^u(t,x)+\pl_x\psi^u(t,x)[g_0(x)+\mathop{\sum}\limits_{i=1}^mg_i(x)\pl_t\beta_i^u(t,\psi^u(t,x))]=0\,,
      \end{equation}
$t\in[t_k,t_{k+1}),\,x\in\hbox{int} B(x^*,\g),\,\,0\leq k\leq N-1$. Here $\beta_i^u(t,\l),\,\,1\leq i\leq m$, are defined in Problem $(R_1)$ (see \eqref{3.2}).
\end{Theorem}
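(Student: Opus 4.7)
The plan is to differentiate the fixed-point identity $\psi^u(t,x)=V^u(t,x;\psi^u(t,x))$ supplied by Theorem \ref{t:2} in both $t$ and $x$ on a continuity interval, and then to eliminate the partial derivatives of $V^u$ via the jump (H-J) equation \eqref{3.16}.

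On a continuity interval $[t_k,t_{k+1})$, the hypothesis that $u(\cdot,\l)$ is $C^1$ in $t$ promotes the bounded-variation integrals $\a_{ij}^u$ in \eqref{2.8} to classical Riemann integrals. Consequently $\beta_i^u(\cdot,\l)\in C^1([t_k,t_{k+1}))$ with an ordinary derivative $\pl_t\beta_i^u(t,\l)$, and \eqref{3.16} specializes to the genuine PDE
$$\pl_tV^u(t,x;\l)+\pl_xV^u(t,x;\l)\Bigl[g_0(x)+\mathop{\sum}\limits_{i=1}^mg_i(x)\pl_t\beta_i^u(t,\l)\Bigr]=0$$
for each fixed $\l\in\mbn$.

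Next, to upgrade $\psi^u$ to a $C^1$-map in $(t,x)$ on the same interval, I would apply the implicit function theorem to $\Phi(t,x,\l)=\l-V^u(t,x;\l)$. The $\l$-Jacobian equals $I-M^u(t,x;\l)$, and Lemma \ref{l:3} together with \eqref{3.15} gives $|M^u|\leq\rho\leq 1/2$, so $I-M^u$ is uniformly invertible. Since $V^u$ is now $C^1$ in $(t,x,\l)$ on the continuity interval (the $t$-smoothness coming from the assumption on $u$), this yields $\psi^u\in C^1([t_k,t_{k+1})\times\hbox{int}B(x^*,\g))$.

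Finally, differentiating $\psi^u(t,x)=V^u(t,x;\psi^u(t,x))$ separately in $t$ and in $x$ produces
$$(I-M^u(t,x;\psi^u))\pl_t\psi^u=\pl_tV^u(t,x;\l)\big|_{\l=\psi^u(t,x)},$$
$$(I-M^u(t,x;\psi^u))\pl_x\psi^u=\pl_xV^u(t,x;\psi^u(t,x)).$$
Substituting the specialized form of \eqref{3.16} at $\l=\psi^u(t,x)$ rewrites $\pl_tV^u|_{\l=\psi^u}$ as $-\pl_xV^u|_{\l=\psi^u}\bigl[g_0(x)+\mathop{\sum}\limits_{i=1}^mg_i(x)\pl_t\beta_i^u(t,\psi^u)\bigr]$, so the two identities combine into
$$(I-M^u)\pl_t\psi^u=-(I-M^u)\pl_x\psi^u\Bigl[g_0(x)+\mathop{\sum}\limits_{i=1}^mg_i(x)\pl_t\beta_i^u(t,\psi^u(t,x))\Bigr],$$
and cancelling the invertible factor $I-M^u$ yields \eqref{3.25}. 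The main obstacle is the $C^1$-regularity of $\psi^u$ in $t$, since Theorem \ref{t:2} only supplies it in $x$; once this is secured via the implicit function theorem as above, the rest is a chain-rule computation combined with the contractivity estimate of Lemma \ref{l:3}.
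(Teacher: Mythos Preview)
Your proposal is correct and follows essentially the same route as the paper: differentiate the fixed-point identity $\psi^u=V^u(t,x;\psi^u)$ on a continuity interval, invoke the invertibility of $I-M^u$ from Lemma~\ref{l:3} and \eqref{3.15}, and substitute the $C^1$ form of \eqref{3.16} at $\l=\psi^u(t,x)$ to cancel the common factor. The only difference is cosmetic: the paper writes the differentiated identities with $[I-\pl_\l V^u]^{-1}$ on the right (equation \eqref{3.28}) rather than $I-M^u$ on the left, and it is less explicit than you are about securing $C^1$-regularity of $\psi^u$ in $t$ via the implicit function theorem.
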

\begin{proof}
By hypothesis, the conclusion of Theorem \ref{t:2} are valid and, in particular, the (H-J) equations \eqref{3.16} can be rewritten
\begin{equation}\label{3.26}
   \pl_t V(t,x)+\pl_x V(t,x)[g_0(x)+\mathop{\sum}\limits_{i=1}^mg_i(x)\pl_t\beta_i^u(t,\psi^u(t,x))=0\,,t\in[t_k,t_{k+1}),
      \end{equation}
$x\in\hbox{int} B(x^*,\g)\},\,\,0\leq k\leq N-1$, where $V(t,x)\mathop{=}\limits^{\hbox{not}} V^u(t,x;\l)\,,\,\,\l\in B(x^*,2\g)$. On the other hand, using integral equations \eqref{3.13}, we get
\begin{equation}\label{3.27}
   \psi^u(t,x)=V^u(t,x;\psi^u(t,x))\,,t\in[t_k,t_{k+1}),x\in B(x^*,\g)\},\,\,0\leq k\leq N-1.
      \end{equation}
By a direct derivation, from \eqref{3.27} we obtain
\begin{equation}\label{3.28}
\left\{
  \begin{array}{ll}
    \pl_t\psi^u(t,x)=[I_n-\pl_\l V^u(t,x;\psi^u(t,x))]^{-1}\pl_t V^u(t,x;\psi^u(t,x))\,,  \\
     \pl_x\psi^u(t,x)=[I_n-\pl_\l V^u(t,x;\psi^u(t,x))]^{-1}\pl_x V^u(t,x;\psi^u(t,x))\,,
  \end{array}
\right.
\end{equation}
for any $t\in[t_k,t_{k+1}),\,x\in\hbox{int} B(x^*,\g),\,0\leq k\leq N-1$. Here the nonsingular matrix used in the right-hand side of \eqref{3.28} relies on \eqref{2.22} in Lemma \ref{l:3} and \eqref{3.15} of Theorem \ref{t:2}. The equations \eqref{3.26} are valid for any $\l\in B(x^*,2\g)$ and, in particular for $\l=\psi^u(t,x)\in B(x^*,2\g)$, we get
\begin{equation}\label{3.29}
   \pl_t V(t,x;\psi^u(t,x))+\pl_x V^u(t,x,\psi^u(t,x))[g_0(x)+\mathop{\sum}\limits_{i=1}^mg_i(x)\pl_t\beta_i^u(t,\psi^u(t,x))]=0\,,
      \end{equation}
for any $t\in[t_k,t_{k+1}),\,x\in\hbox{int} B(x^*,\g),\,0\leq k\leq N-1$.
\end{proof}
Using \eqref{3.29} and multiplying the second equation in \eqref{3.28} by $[g_0(x)+\mathop{\sum}\limits_{i=1}^mg_i(x)\pl_t\beta_i^u(t,\psi^u(t,x))]$, we obtain the conclusion \eqref{3.25}. The proof is complete.

\begin{flushright}
\begin{tabular}{c}
 $1$. Abdus Salam School of Mathematical Science \\
  GC University, \\
  68 B, New MuslimTown, \\
  Lahore, Pakistan. \\
  54600 \\
  \\
 $2$. IMAR. Street Calea Grivit\"{u} nr.21B \\
  Bucharest, Romania. \\

\end{tabular}
\end{flushright}

\end{document}